\documentclass{amsart}
\usepackage{amsmath,amssymb,amscd,latexsym}
\usepackage{float}

\usepackage[all]{xy}

\usepackage{graphicx}

\usepackage{mathrsfs}

\usepackage{stmaryrd}

\usepackage{enumitem}
\usepackage{nicefrac}
\usepackage{multirow}

\usepackage{tikz}
\usepackage{tikz-cd}

\usepackage{hyperref}

\usepackage[normalem]{ulem}

\definecolor{NTNUblue}{RGB}{0,80,158}
\definecolor{NTNUbluesupport}{RGB}{62,98,138}
\definecolor{NTNUorange}{RGB}{239,129,20}


%

%




\newcommand{\Z}{{\mathbb{Z}}}

\newcommand{\ev}{\mathrm{ev}}

\newcommand{\modu}[3]{#1 \equiv #2 \; (\mathrm{mod} \; #3)}

\newcommand{\Hfree}[2]{H_\text{free}^{#1}(#2;\mathbb{Z})}
\newcommand{\Imm}{\mathrm{Im}\,}

\newcommand{\Map}{\mathrm{Map}}

\newcommand{\mapbso}{\mathrm{Map}_f(S^2,BSO(6))}
\newcommand{\mapbe}{\mathrm{Map}_f(S^4,BE_7)}
\newcommand{\BGa}{B\mathcal{G}(\xi)}
\newcommand{\Ga}{\mathcal{G}}

\newcommand{\Spin}{\mathrm{Spin}}

\newcommand{\Sq}{\mathrm{Sq}}

\newtheorem{theorem}{Theorem}[section]
\newtheorem{lemma}[theorem]{Lemma}

\newtheorem*{theorem*}{Theorem}

\theoremstyle{definition}

\newtheorem{remark}[theorem]{Remark}

\usepackage{adjustbox}

\begin{document}

\author{Eiolf Kaspersen}
\address{Department of Mathematical Sciences, NTNU, NO-7491 Trondheim, Norway}
\email{eiolf.kaspersen@ntnu.no}

\author{Gereon Quick}
\thanks{The second-named author was partially supported by the RCN Project No.\,313472.}
\address{Department of Mathematical Sciences, NTNU, NO-7491 Trondheim, Norway}
\email{gereon.quick@ntnu.no}

\title[On the Thom morphism for certain Lie and gauge groups]{A note on the Thom morphism for the classifying space of certain Lie groups and gauge groups}

\date{}

\begin{abstract}
We give a complete description of which non-torsion generators are not in the image of the Thom morphism from complex cobordism to integral cohomology for the classifying space of exceptional Lie groups except for $E_8$. 
We then show that the Thom morphism is not surjective for the classifying space of the gauge group of a principal $E_7$-bundle over the four-dimensional sphere. 
We use the results to detect nontrivial elements in the kernel of the reduced Thom morphism for Lie groups and their classifying spaces.  
\end{abstract}
\subjclass{57T10, 57R77, 55R35, 55S05, 55S10} 
\keywords{Complex cobordism, Lie groups, gauge groups, classifying spaces}

\maketitle

\section{Introduction}

Let $G$ be a compact, connected Lie group, and let $\xi$ denote a principal $G$-bundle 
\begin{equation*}
\begin{tikzcd}
    G \arrow[r, hookrightarrow] & P \arrow[r, "\pi"] & X
\end{tikzcd}
\end{equation*}
over a paracompact space $X$.
Recall that the \textit{gauge group} $\Ga(\xi)$ of $\xi$ is defined to be the group of automorphisms of $\xi$, i.e., 
\begin{equation*}
\Ga(\xi) = \{\phi \in \mathrm{Aut}_G(P) \;\vert\;\pi \circ \phi = \pi\}.
\end{equation*}
Gauge groups play an important role in geometry, topology and mathematical physics.   
Moreover, the classifying space $B\Ga(\xi)$ is homotopy equivalent to the moduli space of connections on $\xi$. 
Integral singular cohomology $H^*(B\Ga(\xi);\Z)$ is a fundamental invariant of $B\Ga(\xi)$, and it is an important question which elements in $H^*(B\Ga(\xi);\Z)$ are the fundamental class of a smooth manifold $M \to B\Ga(\xi)$, i.e., for which classes $[c]$ in $H^*(B\Ga(\xi);\Z)$ is there an oriented, compact, smooth manifold $M$ and a continuous map $f \colon M \to B\Ga(\xi)$ such that $f_*([M]) = [c]$ where $[M]$ denotes the Poincar\'e dual of the fundamental class of $M$ in $H_*(M;\Z)$. 
This question is closely related to the Thom morphism 
\[
\tau \colon MU^*(B\Ga(\xi)) \longrightarrow H^*(B\Ga(\xi);\Z)  
\]
from complex cobordism to singular cohomology 
since complex cobordism classes over $B\Ga(\xi)$ can be represented by complex-oriented proper maps $f \colon M \to B\Ga(\xi)$, where $M$ is a smooth manifold, and $\tau$ sends the cobordism class $[f]$ to the cohomology class $f_*[M]$. 
In fact, the Thom morphism plays a key role in algebraic and geometric topology.  \\

The purpose of the present paper is to show that the Thom morphism for the classifying space of certain gauge groups is not surjective and thereby to show that there is a restriction for how non-torsion classes can be represented by fundamental classes. \\

The cohomology of $B\mathcal{G}(\xi)$ is closely related to that of $BG$, the classifying space of the Lie group $G$.  
The torsion in the cohomology of $\BGa$ has been studied  in \cite{kameko23}, \cite{minowa} and \cite{tsukuda}, see also \cite{KT}.  
When the cohomology has torsion, the Thom morphism can be non-surjective. 
The question of when the Thom morphism is surjective for $BG$ has previously been studied in for example \cite{totaro} and \cite{bo}  
where the non-surjectivity of the Thom morphism for classifying spaces of certain Lie groups is used to detect new phenomena of the cycle map and Deligne cohomology in algebraic geometry. 

In Section \ref{sec:BG}, we determine the image of the Thom morphism for the classifying spaces of $SO(2n)$ 
as well as the exceptional Lie groups, with the exception of $E_8$ where we only provide a partial result. 
We note that partial results were already known for $SO(4)$ and $G_2$.  
Therefore, our results significantly extend the known cases by providing a complete list of which non-torsion generators are in the image for $G_2$, $F_4$, $E_6$ and $E_7$.     
It turns out that the classifying space of $E_7$ behaves in a slightly different way compared to the other exceptional Lie groups. 
We do not know of a geometric explanation of this phenomenon. 
We note that some of our results could also have been deduced from the computations on $BP$-cohomology of Kono and Yagita in \cite[Theorem 5.5]{ky}. 
In Section \ref{sec:gauge_groups} we study the image of the Thom morphism for the classifying spaces of the gauge groups of principal $SO(6)$- and $E_7$-bundles over spheres.  
Our main result is the following:

\begin{theorem}
Let $\xi$ be a principal $E_7$-bundle over $S^4$. 
Then there is a non-torsion generator in $H^4(\BGa;\Z)$ which is not in the image of the Thom morphism.
\end{theorem}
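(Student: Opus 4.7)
The plan is to reduce the statement to the corresponding fact for $BE_7$ established in Section~\ref{sec:BG}, using the mapping space model for $B\Ga(\xi)$ together with the high connectivity of the resulting loop space fiber.

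The starting point is the standard homotopy equivalence $B\Ga(\xi) \simeq \Map_{[\xi]}(S^4, BE_7)$ and its associated evaluation fibration
\[
\Omega^3_0 E_7 \longrightarrow B\Ga(\xi) \xrightarrow{\;\ev\;} BE_7.
\]
The fiber is highly connected: the rational cohomology of $E_7$ has its next generator above degree $3$ only in degree $11$, so the integral homotopy groups $\pi_j(E_7)$ for $4 \le j \le 10$ are torsion, and in this range classical computations show that they in fact vanish. Together with the universal coefficient theorem, this forces $H^i(\Omega^3_0 E_7; \Z) = 0$ for $1 \le i \le 6$. The Serre spectral sequence for the evaluation fibration then shows that $\ev^{\ast}$ induces isomorphisms on integral cohomology in degrees $\le 6$ and is injective in degree $7$. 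In particular, $H^4(B\Ga(\xi); \Z) \cong \Z$ and its generator is the pullback $\ev^{\ast}(x_4)$ of a generator $x_4 \in H^4(BE_7; \Z)$.

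By Section~\ref{sec:BG}, the class $x_4$ is not in the image of the Thom morphism for $BE_7$, and the non-imageness is witnessed by a non-zero class $d_3(x_4) \in H^7(BE_7; \Z)$, where $d_3 = \beta \circ \Sq^2 \circ \rho$ (with $\rho$ the mod-$2$ reduction and $\beta$ the integral Bockstein) is the first non-trivial differential in the Atiyah--Hirzebruch spectral sequence for $MU^{\ast}(BE_7)$. By naturality of $\Sq^2$ and $\beta$ combined with the injectivity of $\ev^{\ast}$ in degree $7$ established above, we obtain
\[
d_3\bigl(\ev^{\ast}(x_4)\bigr) = \ev^{\ast}\bigl(d_3(x_4)\bigr) \neq 0
\]
in $H^7(B\Ga(\xi); \Z)$. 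Consequently, $\ev^{\ast}(x_4)$ does not survive the Atiyah--Hirzebruch spectral sequence for $MU^{\ast}(B\Ga(\xi))$, so it is not in the image of the Thom morphism.

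The main technical obstacle is the connectivity input for the fiber $\Omega^3_0 E_7$: one must verify that sufficiently many low-dimensional homotopy groups of $E_7$ vanish for the Serre spectral sequence to identify $H^4(B\Ga(\xi); \Z)$ with $H^4(BE_7; \Z)$ and to guarantee injectivity of $\ev^{\ast}$ in degree $7$, where the obstruction lives. Once this is in place, the remainder of the proof is a formal naturality argument combined with the $BE_7$ result from Section~\ref{sec:BG}.
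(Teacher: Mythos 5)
Your proposal is correct and follows the same overall architecture as the paper's proof: model $\BGa$ by $\Map_f(S^4,BE_7)$, use the evaluation fibration with fibre $\Omega_0^3(E_7)$, show the fibre is acyclic in low degrees so that the degree-$7$ obstruction class for $e_4$ pulls back nontrivially along $\ev^\ast$, and conclude by naturality. The one genuine difference is the input for the fibre's low-degree vanishing: the paper proves (Lemma \ref{lem:low_degree_cohomology}) that $\widetilde{H}^k(\Omega_0^3(E_7);\Z/2)=0$ for $k\le 7$ using Choi--Yoon's Dyer--Lashof-operation description of $H_\ast(\Omega_0^3(E_7);\Z/2)$, and then runs the mod $2$ Serre spectral sequence so that only $\Sq^3$ and the vanishing of odd-degree Steenrod operations on the image of $\tau$ are needed; you instead invoke the classical vanishing of $\pi_j(E_7)$ for $4\le j\le 10$ to get $7$-connectivity of $\Omega_0^3(E_7)$ and work integrally with the Atiyah--Hirzebruch differential $d_3=\beta\circ\Sq^2\circ\rho$. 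Both routes are valid (your connectivity claim for $E_7$ is a standard fact, and $d_3=\beta\Sq^2\rho$ for $MU$ follows from the comparison with $ku$); your version is arguably more elementary in that it avoids the triple loop space homology computation, while the paper's version only requires knowledge of mod $2$ (co)homology and states the obstruction in the form it already used throughout Section \ref{sec:BG}. The only soft spot is the phrase ``classical computations show that they in fact vanish,'' which should be backed by a citation to the homotopy group tables for $E_7$, since the rational argument alone only gives that these groups are torsion.
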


As explained in \cite{totaro}, the non-surjectivity of $\tau$ can be used to construct examples where the reduced Thom morphism
\begin{equation*}
\bar{\tau} \colon MU^*(X \times B\Z/p) \otimes_{MU^\ast} \Z \longrightarrow H^*(X \times B\Z/p;\Z)
\end{equation*}
is not injective. 
The non-injectivity of the reduced Thom morphism is crucial for the applications in algebraic geometry in \cite{totaro}. 
In Section \ref{sec:injectivity}, we show that $\bar{\tau}$ is not injective when $X$ is the classifying space of $SO(n)$ with $n$ even or an exceptional Lie group and $p=2$. 
We note that the work of Kono and Yagita in \cite{ky} implies a stronger statement on the integral reduced Thom morphism for the classifying spaces of $F_4$ and $E_6$.  
In Theorem \ref{thm:LG_not_injective}, we provide a complete list of the simplest cases of non-injectivity of $\bar{\tau}$ for $X$ a compact connected Lie group with simple Lie algebra. 


\subsection*{Acknowledgement}
We are grateful to Nobuaki Yagita and the anonymous referee for helpful comments and suggestions.


\section{Classifying spaces of Lie groups}\label{sec:BG}

Let $X$ denote a finite $CW$-complex. 
It is a well-known fact that all Steenrod operations of odd degree vanish on the image of $MU^*(X)$ in $H^*(X;\Z/p)$ for all prime numbers $p$ (see for example \cite[page 468]{totaro}, \cite[Proposition 3.6]{bo},  \cite{cartanSem}). 
Hence, in order to show that an element $x \in H^*(X;\Z)$ is not in the image of the Thom morphism, it suffices to find a Steenrod operation of odd degree which does not vanish on $r(x)$ where $r \colon H^*(X;\Z) \rightarrow H^*(X;\Z/p)$ denotes the reduction map. 
We will now apply this observation to the cases where $X$ is $BSO(n)$ or the classifying space of an exceptional Lie group. 
%
%
We let $\Hfree{\ast}{X}$ denote the quotient of $H^\ast(X;\Z)$ by the torsion subgroup. 
When there is no risk of confusion we often use the same notation for an element in $H^\ast(X;\Z)$ and its image in $\Hfree{\ast}{X}$.

\subsection{Special Orthogonal Groups}

Recall from \cite[Theorem III.5.16]{mt} that the free cohomology of $BSO(n)$ is given by
\begin{align*}
    \Hfree{\ast}{BSO(n)} \cong \begin{cases}
        \Z[e_4,e_8,\ldots,e_{2n-2}],  &\text{$n$ odd} \\
        \Z[e_4,e_8,\ldots,e_{2n-4},\chi_n],  &\text{$n$ even.}
    \end{cases}
\end{align*}

\begin{theorem}\label{thm:bso}
Let $n \geq 4$ be even and let $z\in H^n(BSO(n);\Z)$ be a torsion class (or $0$). Then $(\chi_n + z) \in H^n(BSO(n);\Z)$ is not in the image of the Thom morphism.
\end{theorem}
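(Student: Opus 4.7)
The strategy is to show $\Sq^{3} r(\chi_{n}+z) \neq 0$ in $H^{n+3}(BSO(n); \Z/2)$, where $r$ denotes mod $2$ reduction. Since $\Sq^{3}$ has odd degree, this rules out $\chi_{n}+z$ being in the image of the Thom morphism by the criterion recalled at the beginning of Section~\ref{sec:BG}.

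By Wu's formula in $H^{*}(BSO(n); \Z/2) = \Z/2[w_{2},\ldots,w_{n}]$, using $w_{1}=0$ and $w_{j}=0$ for $j>n$, all but one term of $\Sq^{3}w_{n}$ vanish and one obtains $\Sq^{3}w_{n} = w_{3}w_{n}$, which is nonzero since $n\geq 4$ and $w_{3},w_{n}$ are distinct polynomial generators. An analogous computation, using also that $\binom{2m}{3}\equiv 0\pmod{2}$ by Lucas, gives $\Sq^{3}w_{2j+1} = w_{3}w_{2j+1}$ for every odd index $2j+1<n$.

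The main step is to verify $\Sq^{3}r(z)=0$ for every torsion class $z\in H^{n}(BSO(n); \Z)$. For this I would use the Milnor primitive $Q_{1}=\Sq^{3}+\Sq^{2}\Sq^{1}$, a derivation of degree $3$ on mod~$2$ cohomology. Since $\Sq^{1}r(z)=0$ for every integral class, one has $\Sq^{3}r(z)=Q_{1}r(z)$. By the classical description of $H^{*}(BSO(n);\Z)$, the ring is generated by the Pontryagin classes $p_{i}$, the Euler class $\chi_{n}$, and the integral Bocksteins $W_{2j+1}=\beta w_{2j}$, reducing under $r$ to $w_{2i}^{2}$, $w_{n}$, and $w_{2j+1}$ respectively. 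A torsion class of degree $n$ cannot contain $\chi_{n}$ as a factor, since the complementary factor would then be a torsion class of degree zero, hence trivial. Consequently every monomial in $r(z)$ has the form
\[
M \;=\; \prod_{i}(w_{2i}^{2})^{a_{i}}\cdot\prod_{l=1}^{K} w_{2j_{l}+1},
\]
not involving $w_{n}$, and the parity of $n$ forces $K$ to be even.

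Finally, $Q_{1}$ annihilates any square because $Q_{1}(x^{2})=2x\,Q_{1}(x)\equiv 0\pmod{2}$, and $Q_{1}w_{2j+1}=w_{3}w_{2j+1}$ by the opening calculation. The derivation property then gives $Q_{1}(M)= K\cdot w_{3}\cdot M\equiv 0\pmod{2}$, whence $\Sq^{3}r(z)=0$ and therefore $\Sq^{3}r(\chi_{n}+z)=w_{3}w_{n}\neq 0$, as required. The main obstacle in this plan is the structural input used in the third paragraph, namely the description of the torsion subgroup of $H^{*}(BSO(n);\Z)$ in degree $n$ as spanned by products involving at least one integral Bockstein $W_{2j+1}$; this is classical (Brown) but should be recalled or cited explicitly.
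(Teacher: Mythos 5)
Your overall strategy coincides with the paper's: reduce mod $2$, use Wu's formula to get $\Sq^3 w_n = w_3 w_n \neq 0$, and then argue that the torsion part cannot interfere. The problem is in the third paragraph, and it is not merely a missing citation. Brown's and Feshbach's theorems do \emph{not} say that $H^*(BSO(n);\Z)$ is generated as a ring by the $p_i$, $\chi_n$ and the classes $\beta w_{2j}$; one needs integral Bocksteins of arbitrary monomials in the $w_i$. For example, $\beta(w_2w_4)$ reduces to $\Sq^1(w_2w_4)=w_3w_4+w_2w_5$, whose monomials contain $w_4$ and $w_2$ to the first power and hence do not lie in the subring $\Z/2[w_{2i}^2, w_n, w_{2j+1}]$; since reduction is injective on the (order-two) torsion, $\beta(w_2w_4)$ is not in your proposed subring. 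So the asserted normal form $M=\prod_i (w_{2i}^2)^{a_i}\prod_l w_{2j_l+1}$ for the monomials of $r(z)$ is false. Worse, the intermediate claim you are trying to prove --- that $\Sq^3 r(z)=0$ for every torsion class $z$ of degree $n$ --- is itself false: for $n=10$ take $z=\beta(w_2w_3w_4)\in H^{10}(BSO(10);\Z)$, so that $r(z)=\Sq^1(w_2w_3w_4)=w_3^2w_4+w_2w_3w_5$; a direct computation with $Q_1$ (using $Q_1w_2=w_5+w_2w_3$, $Q_1w_3=w_3^2$, $Q_1w_4=w_7+w_3w_4$, $Q_1w_5=w_3w_5$) gives $\Sq^3 r(z)=w_3^2w_7+w_3^3w_4+w_3w_5^2+w_2w_3^2w_5\neq 0$. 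Hence no correction of the generator list can rescue the lemma as stated.

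What saves the theorem, and what the paper actually does, is that one does not need $\Sq^3 r(z)=0$: one only needs that the single monomial $w_3w_n$ cannot be cancelled. By Wu's formula, the only monomial of degree $n$ other than $w_n$ whose image under $\Sq^3$ can contain $w_3w_n$ is $w_2w_{n-2}$ (note that in my degree-$10$ example none of the surviving monomials is $w_3w_{10}$). One then checks, via $\Sq^1$ (equivalently the Bockstein spectral sequence), that $w_2w_{n-2}$ cannot occur as a summand of the reduction of any integral class: $\Sq^1(w_2w_{n-2})=w_3w_{n-2}+w_2w_{n-1}$, and no other degree-$n$ monomial can cancel these terms, so $w_2w_{n-2}$ does not appear in any element of $\ker\Sq^1\supseteq \mathrm{Im}(r)$. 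I would recommend replacing your third and fourth paragraphs with this coefficient-tracking argument; your first two paragraphs (the Wu computations, including the Lucas observation) can be kept as they stand.
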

\begin{proof}
From \cite[Theorems III.3.19 and III.5.12]{mt} we know that the mod $2$ cohomology of $BSO(n)$ is given by
\begin{equation*}
H^\ast(BSO(n);\Z/2) \cong \Z/2[y_2,y_3,\ldots,y_n],
\end{equation*}
with
\begin{equation}\label{bsosq}
\Sq^j(y_k) = \sum_{i=0}^j \binom{k-i-1}{j-i} y_{k+j-i} y_i.
\end{equation}
Let $r$ denote the mod $2$-reduction map $H^\ast(BSO(n);\Z) \rightarrow H^\ast(BSO(n);\Z/2)$. 
Then
\begin{align*}
    r(e_{4k}) &= y_{2k}^2 + \text{(other terms)}\\
    r(\chi_n) &= y_n + \text{(other terms)},
\end{align*}
since $y_{2k}^2$ and $y_n$ are neither the image nor the source of a nontrivial Bockstein homomorphism. Furthermore, we have
\begin{equation*}
    \Sq^3(y_n) = \sum_{i=0}^3 \binom{n-1-i}{3-i} y_{n+3-i} y_i = y_3 y_n \neq 0.
\end{equation*}
We will now show that no other term in $r(\chi_n + z)$ is mapped to $y_3y_n$ by $\Sq^3$. 
It follows from equation \eqref{bsosq} that the only other element of $H^{n-3}(BSO(n);\Z/2)$ which can map to $y_3y_n$ under $\Sq^3$ is $y_2y_{n-2}$. 
For this element, the relevant part of the Bockstein diagram for $BSO(n)$ is the following:
\begin{equation*}
\begin{tikzcd}
 \text{Degree:} & n & n+1 & n+2 \\
 \text{Generators: } & y_2y_{n-2} \arrow[r] \arrow[dr]  & y_3 y_{n-2} \arrow[dr] \\
 & & y_2 y_{n-1} \arrow[r] & y_3 y_{n-1}
 \end{tikzcd}
 \end{equation*}
 where there are no other nontrivial Bocksteins going into or out of any of these elements (see for example \cite[Chapter 3E]{hatcher} and \cite[Section 2.2]{KQ} for the use of Bockstein cohomology). 
 It follows that $y_2y_{n-2}$ does not generate a nontrivial element of the Bockstein cohomology of $BSO(n)$, and thus it is not one of the summands in $r(\chi_n + z)$. 
 Thus, we have $\Sq^3(r(\chi_n+ z)) \neq 0$. 
 The statement then follows from the fact that all odd-degree elements of the Steenrod algebra vanish on the image of the Thom morphism.
\end{proof}

\subsection{Exceptional Groups}

Recall from \cite[Theorems VI.5.5 and VI.5.10]{mt} that the free cohomologies of the classifying spaces of the simply connected exceptional Lie groups are given by
\begin{align*}
    \Hfree{\ast}{BG_2} &\cong \Z[e_4,e_{12}]\\
    \Hfree{\ast}{BF_4} &\cong \Z[e_4,e_{12},e_{16},e_{24}]\nonumber\\
    \Hfree{\ast}{BE_6} &\cong \Z[e_4,e_{10},e_{12},e_{16},e_{18},e_{24}]\nonumber\\
    \Hfree{\ast}{BE_7} &\cong \Z[e_4,e_{12},e_{16},e_{20},e_{24},e_{28},e_{36}]\nonumber\\
    \Hfree{\ast}{BE_8} &\cong \Z[e_4,e_{16},e_{24},e_{28},e_{36},e_{40},e_{48},e_{60}].\nonumber
\end{align*}

\begin{theorem}\label{thm:thom_BG}
Let $G=$ $G_2$, $F_4$ or $E_6$. Then the generator $e_4\in H^\ast(BG;\Z)$ is not in the image of the Thom morphism, while all other non-torsion generators are in the image.
\end{theorem}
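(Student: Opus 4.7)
The plan follows the same strategy as the proof of Theorem \ref{thm:bso}. For the first assertion, I use odd Steenrod operations to obstruct $e_4$; for the second, I realize each remaining non-torsion generator as an integer polynomial in Chern classes of a complex representation of $G$, which automatically lies in the image of the Thom morphism via Conner--Floyd classes in $MU$-cohomology.

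For the obstruction part, recall from classical computations (see \cite{mt}) that for $G \in \{G_2, F_4, E_6\}$ the mod $2$ cohomology $H^*(BG;\Z/2)$ is a polynomial ring containing generators $w_4, w_6, w_7$ among its low-degree generators, with $H^4(BG;\Z/2) = \Z/2 \cdot w_4$ and no additional generators below degree $7$. Wu's formula gives
\[
\Sq^3(w_4) = w_3 w_4 + w_2 w_5 + w_1 w_6 + w_7 = w_7 \neq 0,
\]
since $w_1, w_2, w_3, w_5$ all vanish on $BG$ for simply connected $G$. Because $H^4(BG;\Z) \cong \Z$ is torsion-free in these three cases, the mod $2$ reduction $r(e_4)$ coincides with $w_4$. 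Hence $\Sq^3(r(e_4)) = w_7 \neq 0$, and $e_4$ cannot lie in the image of $\tau$ by the vanishing of odd Steenrod operations on $\mathrm{Im}(\tau)$.

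For the remaining non-torsion generators, I would work with a faithful complex representation $\rho \colon G \to U(N)$: the complexification of the $7$-dimensional standard representation for $G_2$, the $26$-dimensional minimal representation for $F_4$, and the $27$-dimensional minimal representation for $E_6$ (supplemented by further representations such as the adjoint should a single one not suffice). Chern classes $c_i(\rho) \in H^{2i}(BG;\Z)$ lift to Conner--Floyd Chern classes in $MU^*(BG)$ and thus belong to $\mathrm{Im}(\tau)$; restricting to a maximal torus, each $c_i(\rho)$ becomes an elementary symmetric polynomial in the weights of $\rho$ and can be compared directly against the Weyl-invariant polynomial generators from \cite[Theorem VI.5.10]{mt}. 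The main obstacle, and the least routine step, is the \emph{integral} identification: simple-connectedness of $G$ forces the Dynkin index of every complex representation to be an even integer for these groups, so $c_2(\rho)$ is a multiple of $2 e_4$ rather than of $e_4$ itself, and one must confirm that each higher generator $e_{2k}$ is nevertheless expressible as an integer polynomial combination of the Chern classes of various representations, not merely a rational one. As mentioned in the introduction, this integral matching can alternatively be extracted from the $BP$-cohomology computations of Kono--Yagita \cite{ky}.
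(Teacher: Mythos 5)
Your obstruction argument for $e_4$ is essentially the paper's: reduce mod $2$, identify $r(e_4)$ with the degree-$4$ polynomial generator, and use $\Sq^3(y_4)=y_7\neq 0$ together with the vanishing of odd Steenrod operations on $\mathrm{Im}(\tau)$. That half is fine (your appeal to Wu's formula is really just the statement that $y_4,y_6,y_7$ pull back from $BSO(7)$ along the $7$-dimensional representation, which is how the relation $\Sq^3 y_4=y_7$ is usually verified).

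For the second half you take a genuinely different route from the paper, and it is here that your argument has a real gap. The paper proves that the remaining non-torsion generators lie in $\mathrm{Im}(\tau)$ by analysing the Atiyah--Hirzebruch spectral sequence: all differentials are torsion-valued, hence (at $p=2$ and $p=3$, the only torsion primes occurring) are given by compositions of Steenrod squares and mod-$3$ power operations, and one checks directly that every such composition raising degree by an odd number kills the images of the relevant generators. You instead propose to exhibit each generator as an \emph{integral} polynomial in Chern classes of complex representations, so that it lifts to Conner--Floyd classes in $MU^*(BG)$. That strategy is sufficient but not necessary: the image of $\tau$ can strictly contain the Chern subring, so even where the theorem is true your method might fail to prove it. More importantly, you never carry out the decisive step. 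You explicitly identify ``the integral identification'' as the main obstacle --- confirming that each $e_{2k}$ with $2k\neq 4$ is an integer (not merely rational) polynomial in Chern classes of the representations you list --- and then defer it to Kono--Yagita rather than verifying it. Since this is exactly the content of the second assertion of the theorem, the positive half of the statement remains unproved in your write-up. To close the gap you would either have to perform the explicit restriction-to-maximal-torus computation and solve the integrality problem generator by generator (for $\Z[e_4,e_{12}]$, $\Z[e_4,e_{12},e_{16},e_{24}]$, and the six generators for $E_6$), or switch to the paper's spectral-sequence argument, which reduces the question to a finite check of odd-degree cohomology operations.
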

\begin{proof}
The mod $2$ cohomology of each of the classifying spaces is given by
    \begin{align*}
        H^\ast(BG_2;\Z/2) &\cong \Z/2[y_4,y_6,y_7,y_{10}] \\
        H^\ast(BF_4;\Z/2) &\cong \Z/2[y_4,y_6,y_7,y_{16},y_{24}] \nonumber \\
        H^\ast(BE_6;\Z/2) &\cong \Z/2[y_4,y_6,y_7,y_{10},y_{18},y_{32},y_{34},y_{48}] / I, \nonumber
    \end{align*}
    where $I$ denotes the ideal given by 
    \begin{equation*}
        I = \langle y_7y_{10},y_7y_{18},y_7y_{34},y_{34}^2 + y_{18}^2y_{32} + y_{10}^2y_{48} + y_6y_{10}y_{18}y_{34} + y_4y_{10}y_{18}^3 + y_4y_{10}^3y_{34}\rangle
    \end{equation*}
see \cite[Corollary VII.6.3 and Theorem VII.6.6]{mt} and \cite[Theorem 1.1 and Proposition 5.1]{knn}. 
In each of the four cases, 
the reduction map $H^\ast(BG;\Z) \rightarrow H^\ast(BG;\Z/2)$ sends $e_4$ to $y_4$, and in each case $\Sq^3(y_4) = y_7$. 
This proves the first claim since $\Sq^3$ vanishes on the image of the Thom morphism as we pointed out at the beginning of Section \ref{sec:BG}. 
For the other generators, the claim follows from the fact that all differentials in the Atiyah--Hirzebruch spectral sequence vanish as explained, for example, in \cite[Section 2.1]{KQ} or \cite[page 471]{totaro}. 
To prove the latter assertion, we note first that all differentials in the Atiyah--Hirzebruch spectral sequence for the respective spaces are torsion, i.e., their images are contained in the subgroup of torsion elements. 
For $BG_2$, the cohomology has only $2$-torsion. 
Therefore, it suffices to show that all odd degree compositions of Steenrod squares vanish. 
This can be checked directly for $e_{12}$.  
For $BF_4$ and $BE_6$, 
the respective cohomology groups contain both $2$- and $3$-torsion. 
The cohomology operations in $H^\ast(BF_4;\Z/3)$ and $H^\ast(BE_6;\Z/3)$ are all power operations for $p=3$.  
The differentials in the Atiyah--Hirzebruch spectral sequence are 
compositions of Steenrod squares and power operations. 
We can then check directly that all such compositions which increase the cohomological degree by an odd number vanish on the respective generators. 
This proves the second claim and finishes the proof of the theorem. 
\end{proof}

As the following theorem shows, the situation is a bit different for the classifying space $BE_7$. 
While in the previous cases only the generator in degree $4$ is not hit by the Thom morphism, 
for $E_7$ there are several generators which are not in the image of $\tau$. 

\begin{theorem}\label{thm:thom_BE7}
The generators $e_4,e_{16},e_{24},e_{28} \in H^\ast(BE_7;\Z)$ are not in the image of the Thom morphism, and nor are the sums of any of these generators with a $2$-torsion element in the same degree. 
The non-torsion generators $e_{12}$ and $e_{20}$ are in the image of the Thom morphism. 
For the generator $e_{36}\in \Hfree{36}{BE_7}$, there exist lifts to $H^{36}(BE_7;\Z)$ which are not in the image of the Thom morphism, while other lifts are in the image. 
\end{theorem}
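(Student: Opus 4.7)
The plan is to combine the odd-degree Steenrod obstruction of Theorems \ref{thm:bso} and \ref{thm:thom_BG} with a careful Bockstein analysis, using the mod $2$ and mod $3$ cohomology descriptions of $BE_7$. I would start by recalling from Mimura--Toda and Kono--Mimura (see also Kono--Yagita \cite{ky}) the structure of $H^*(BE_7;\Z/2)$ and $H^*(BE_7;\Z/3)$ as modules over their Steenrod algebras, together with the relevant Bockstein homomorphisms, so that the mod $p$ reductions of the non-torsion generators $e_{4k}$ are expressed as explicit polynomials in the mod $p$ generators.

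For $e_4$, the argument is verbatim that of Theorem \ref{thm:thom_BG}: the reduction satisfies $\Sq^3 r(e_4) = y_7 \neq 0$. For $e_{16}, e_{24}$ and $e_{28}$, I would identify in each case an odd-degree composition $\theta$ of Steenrod squares such that $\theta(r(e_{4k})) \neq 0$, typically ending with a factor of $\Sq^3$ applied to the leading polynomial term in $r(e_{4k})$. To upgrade this into a statement about arbitrary lifts $e_{4k} + z$ with $z$ a $2$-torsion class in the same degree, I would enumerate the $2$-torsion in $H^{4k}(BE_7;\Z)$ via the Bockstein spectral sequence, just as in the proof of Theorem \ref{thm:bso}, and check degree by degree that no such $z$ produces a term in $\theta(r(z))$ cancelling $\theta(r(e_{4k}))$.

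For $e_{12}$ and $e_{20}$, I would follow the second half of the proof of Theorem \ref{thm:thom_BG} and show that every cohomology operation of odd degree vanishes on the mod $2$ and mod $3$ reductions of these classes. At $p=3$ the cohomology of $BE_7$ in these degrees is polynomial, and a direct computation with the Steenrod algebra at $p=2$ handles the $2$-primary part. Since the differentials in the Atiyah--Hirzebruch spectral sequence computing $MU^*(BE_7)$ from $H^*(BE_7;\Z)$ are built from such operations, the classes $e_{12}$ and $e_{20}$ are permanent cycles, hence lie in the image of $\tau$.

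For $e_{36}$, the delicate case, I would exhibit two lifts of the generator of $\Hfree{36}{BE_7}$ to $H^{36}(BE_7;\Z)$ behaving differently under $\tau$. Using the mod $2$ cohomology in degree $36$, I would construct one lift $e_{36} + z_0$ whose reduction lies in the kernel of every odd-degree cohomology operation, choosing $z_0$ so that $r(z_0)$ exactly cancels the obstruction coming from the leading term of $r(e_{36})$; this lift is then in the image of $\tau$ by the AHSS argument. For a different choice $e_{36} + z_1$, the Bockstein analysis of the previous paragraph shows that some odd Steenrod operation survives, so that lift is not in the image. I expect the main obstacle to be the bookkeeping: identifying the correct odd-degree composition $\theta$ for each of $e_{16}, e_{24}, e_{28}$, classifying the $2$-torsion classes in each of these degrees via the Bockstein cohomology, and, for $e_{36}$, producing an explicit $2$-torsion correction $z_0$ whose mod $2$ reduction cancels the obstruction rather than reinforcing it.
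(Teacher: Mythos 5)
Your proposal follows essentially the same route as the paper: single odd-degree Steenrod squares ($\Sq^3$ on $r(e_4)=y_4$ and $r(e_{24})=y_6y_{18}$, and $\Sq^{15}$ on $r(e_{16})=y_6y_{10}$) obstruct $e_4,e_{16},e_{24}$, a Bockstein analysis handles the torsion ambiguity for $e_{28}$ and $e_{36}$, and the vanishing of all odd-degree compositions of Steenrod squares and $p=3$ power operations puts $e_{12},e_{20}$ in the image via the Atiyah--Hirzebruch spectral sequence. The one place where your guess inverts the actual mechanism is $e_{36}$: the leading term $y_{18}^2$ of $r(e_{36})$ is already annihilated by every odd-degree operation, and it is certain torsion corrections (those contributing $y_4y_7^3y_{11}$, on which $\Sq^3$ is nonzero) that \emph{introduce} the obstruction, rather than a correction $z_0$ being needed to cancel an obstruction carried by the leading term.
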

\begin{proof}
    The mod $2$ cohomology of $BE_7$ is given by
    \begin{align*}
        H^\ast(BE_7;\Z/2) &\cong \Z/2[y_4,y_6,y_7,y_{10},y_{11},y_{18},y_{19},\nonumber\\
        &\qquad\quad\; y_{34},y_{35},y_{64},y_{66},y_{67},y_{96},y_{112}] / J, \nonumber
    \end{align*}
    where $J$ denotes the ideal given by 
    \begin{align*}
        J = &\langle 
        y_6y_{11} + y_7y_{10}, ~
        y_6y_{19} + y_7y_{18}, ~
        y_{10}y_{19} + y_{11}y_{18}, ~
        y_{11}^3 + y_7^2y_{19}, \\
        & y_6y_{35} + y_7y_{34}, ~
        y_{10}y_{35} + y_{11}y_{34}, ~
        y_{11}y_{19}^2,  ~
        y_{18}y_{35} + y_{19}y_{34}, \\
       & y_{19}^3, ~
        y_6y_{67} + y_7y_{66}, ~
        y_{10}y_{67} + y_{11}y_{66}, ~
        y_{18}y_{67} + y_{19}y_{66}, \\
        & y_7^2y_{67} + y_{11}y_{35}^2, ~
        y_{11}^2y_{67} + y_{19}y_{35}^2, ~ 
        y_{34}y_{67} + y_{35}y_{66}, ~
        y_{19}^2y_{67}, \\
       & y_{66}^2 + y_{10}^2y_{112} + y_{18}^2y_{96}, ~
        y_{66}y_{67} + y_{10}y_{11}y_{112} + y_{18}y_{19}y_{96}, \\
        & y_{67}^2 + y_{11}^2y_{112} + y_{19}^2y_{96}, ~
        y_{35}^2y_{67} + y_{7}^2y_{11}y_{112} + y_{11}^2y_{19}y_{96}, \\
        & y_{34}^2 + y_6^4y_{112} + y_{10}^4y_{96} + y_{18}^4y_{64}, \\
        & y_{34}^3y_{35} + y_6^3y_7y_{112} + y_{10}^3y_{11}y_{96} + y_{18}^3y_{19}y_{64}, \\
        & y_{34}^2y_{35}^2 + y_6^2y_7^2y_{112} + y_{10}^2y_{11}^2y_{96} + y_{18}^2y_{19}^2y_{64}, \\
        & y_{34}y_{35}^3 + y_6y_7^3y_{112} + y_7^2y_{11}y_{18}y_{96}, ~
        y_{35}^4 + y_7^4y_{112} + y_{11}^4y_{96} 
        \rangle,
    \end{align*}
see \cite[Theorem 4.4]{mimura}. 
From \cite[page 276 and Corollary 6.9]{kms} we then get that the Steenrod algebra acts on the following generators as  
    \begin{align*}
        \Sq^3(r(e_4)) &= \Sq^3(y_4) = y_7 \\
        \Sq^{15}(r(e_{16})) &= \Sq^{15}(y_6 y_{10}) = y_6^2 y_{19} + y_{10}^2 y_{11} + y_4 y_7 y_{10}^2 \\
        \Sq^3(r(e_{24})) &= \Sq^3(y_6 y_{18}) = y_7 y_{10}^2. 
    \end{align*}
This implies, in particular, that these elements are not in the image of the Thom morphism, since all Steenrod operations of odd degree vanish on the image of the Thom morphism (see the beginning of Section \ref{sec:BG} for references for this claim). 
By analysing the Bockstein homomorphisms, we see that an element of $H^{28}(BE_7;\Z)$ which maps to the generator $e_{28} \in \Hfree{28}{BE_7}$ can be mapped to either $y_{10}y_{18}$ or $y_{10}y_{18} + y_7^4$ by $r$. 
Both of these elements are mapped to $y_{10}^2 y_{11}$ by $\Sq^3$, so $e_{28}$ plus torsion is not in the image of the Thom morphism either. 

For the generators $e_{12}$ and $e_{20}$, the claim follows again from the fact that all differentials in the Atiyah--Hirzebruch spectral sequence vanish as explained, for example, in \cite[Section 2.1]{KQ} or \cite[page 471]{totaro}.  
To prove the latter assertion, we use again that all differentials in the Atiyah--Hirzebruch spectral sequence for the respective spaces are torsion. 
The cohomology groups of $BE_7$ contain both $2$- and $3$-torsion. 
The cohomology operations in $H^\ast(BE_7;\Z/3)$ are all power operations for $p=3$.  
The differentials in the Atiyah--Hirzebruch spectral sequence are thus 
compositions of Steenrod squares and power operations. 
We can then check directly that all such odd degree compositions vanish on the images of the generators $e_{12}$ and $e_{20}$. 

Finally, we have the following Bockstein diagram in degree $36$: 
    \begin{equation*}
\adjustbox{scale=0.8}{
\begin{tikzcd}[row sep=tiny, cramped]
35 & 36 & 37 \\
y_4 y_7^3 y_{10} \arrow[to=47311] & |[alias=47311]| y_4 y_7^3 y_{11} & y_{18} y_{19} \\
y_7^2 y_{10} y_{11} \arrow[to=72112] & |[alias=72112]| y_7^2 y_{11}^2 & |[alias=47219]| y_4 y_7^2 y_{19} \\
y_4^2 y_6 y_7^3 \arrow[to=4274] & |[alias=4274]| y_4^2 y_7^4 & |[alias=421118]| y_4^2 y_{11} y_{18} \\
y_{35}  & |[alias=71118]| y_7 y_{11} y_{18} \arrow[to=71119] & |[alias=71119]| y_7 y_{11} y_{19} \\
y_4^4 y_{19} & y_4 y_{10} y_{11}^2 \arrow[to=4113] & |[alias=4113]| y_4 y_{11}^3 \\
y_7 y_{10} y_{18} & y_4 y_6^3 y_7^2 \arrow[to=46273] & |[alias=46273]| y_4 y_6^2 y_7^3 \\
y_4 y_6 y_7 y_{18} & y_4^4 y_6 y_7^2 \arrow[to=4473] & |[alias=4473]| y_4^4 y_7^3 \\
y_4 y_{10}^2 y_{11} & y_{18}^2 & |[alias=62718]| y_6^2 y_7 y_{18} \\
y_4^6 y_{11} & y_4^2 y_{10} y_{18} & |[alias=43718]| y_4^3 y_7 y_{18} \\
y_4^2 y_7 y_{10}^2 & y_6^3 y_{18} & |[alias=427112]| y_4^2 y_7 y_{11}^2 \\
y_6^3 y_7 y_{10} & y_4^3 y_6 y_{18}  & y_4^4 y_{10} y_{11} \\
y_4^3 y_6 y_7 y_{10}  & y_6 y_{10}^3  & |[alias=437211]| y_4^3 y_7^2 y_{11} \\
y_4 y_6^4 y_7 & y_4^4 y_{10}^2  & |[alias=7103]| y_7 y_{10}^3 \\
y_4^4 y_6^2 y_7 & y_4 y_6^2 y_{10}^2 & y_4 y_6 y_7 y_{10}^2 \\
y_4^7 y_7 & y_4^2 y_6^3 y_{10} & y_6 y_7^3 y_{10} \\
     & y_4^5 y_6 y_{10}  & |[alias=4262710]| y_4^2 y_6^2 y_7 y_{10}\\
     & y_6^6 & |[alias=45710]| y_4^5 y_7 y_{10}  \\
     & y_4^3 y_6^4 & y_4^3 y_6^3 y_7 \\
    & y_4^6 y_6^2 & y_6^5 y_7 \\
 & y_4^9 & y_4^6 y_6 y_7 
\end{tikzcd}}
\end{equation*}
The diagram shows that an element of $H^{36}(BE_7;\Z)$ corresponding to the generator $e_{36}\in \Hfree{36}{BE_7}$ is mapped to $y_{18}^2 + L$ by $r$, where $L$ is some linear combination of the elements
\begin{equation*}
    y_4y_7^3y_{11},\; y_7^2 y_{11}^2,\; y_4^2 y_7^4.
\end{equation*}
While all odd-degree elements of the Steenrod algebra act trivially on $y_{18}^2$, $y_7^2 y_{11}^2$ and $y_4^2 y_7^4$, we have $\Sq^3(y_4y_7^3y_{11}) = y_7^4 y_{11}$. 
Thus, any lift of $e_{36}$ which contains the term $y_4y_7^3y_{11}$ is not in the image of the Thom morphism, while any lift which does not contain that term is in the image.
This proves the last claim and finishes the proof of the theorem. 
\end{proof}

\begin{remark}
As for the other exceptional Lie groups, the generator $e_4\in H^4(BE_8;\Z)$ is not in the image of the Thom morphism. 
This can for example be shown using the fact that $BE_8$ and the Eilenberg--MacLane space $K(\Z,4)$ have homotopy equivalent $15$-skeletons \cite[page 185]{hill}. 
However, since the mod $2$ cohomology of $BE_8$ is not known, we cannot give a complete answer to which other generators are in the image of the Thom morphism.
\end{remark}

\begin{remark}\label{rem:KY_alternative}
We note that some of our results could have been deduced from the computations of $BP$-cohomology of Kono and Yagita. 
Moreover, it follows from \cite[Theorem 5.5]{ky} that $e_4$ in $H^4(BG;\Z)$ is not in the image of $\tau$ for $G=SO(4)$ whereas $2e_4$ is in the image. 
\end{remark}


\section{Gauge Groups}\label{sec:gauge_groups}

Let $X$ be a paracompact space, and let $\xi$ be a principal $G$-bundle over $X$. 
The bundle $\xi$ is classified by a map $f\colon X \rightarrow BG$. 
Let $\Map_f(X,BG)$ denote the path component of $\Map(X,BG)$ which contains the map $f$. 
By \cite[Proposition 2.4]{ab}, $\BGa$ equals  $\Map_f(X,BG)$ in homotopy theory, and 
we will consider $\Map_f(X,BG)$ as a model for $\BGa$.  
For $X=S^n$, there is a fibre sequence
\begin{equation}\label{BGfibseq}
\begin{tikzcd}
    \Omega_f^n(BG) \arrow[r, hookrightarrow] & \mathrm{Map}_f(S^n,BG) \arrow[r, "\mathrm{ev}"] & BG,
\end{tikzcd}
\end{equation}
where $\Omega_f^n(BG)$ denotes the path component of $\Omega^n(BG)$ which contains $f$ and $\mathrm{ev}$ denotes the evaluation map at the basepoint of $S^n$. 
Note that maps in $\Map(S^n,BG)$ are not required to be pointed, while maps in $\Omega^n(BG)$ are. 
We further note that $\Omega BG \cong G$ and that all path-components of $\Omega^{n-1}(G)$ are homotopy equivalent. 
Therefore, sequence (\ref{BGfibseq}) simplifies to
\begin{equation}\label{BGfibseq2}
\begin{tikzcd}
    \Omega_0^{n-1}(G) \arrow[r, hookrightarrow] & \Map_f(S^n,BG) \arrow[r, "\mathrm{ev}"] & BG,
\end{tikzcd}
\end{equation}
where $\Omega_0^{n-1}(G)$ denotes the path-component of $\Omega^{n-1}(G)$ which contains the constant map.
%
Since there is a cross-section $s \colon BG \to \Map_f(S^n,BG)$ such that $\mathrm{ev}\circ s$ is the identity, 
the Thom morphism is easily seen to be non-surjective for $\Map_f(S^n,BG)$ if it is non-surjective for $BG$. 
When the Thom morphism is not surjective for $BG$, however, 
then the question of surjectivity of the Thom morphism for $\Map_f(S^n,BG)$ is more interesting. 
First we need the following lemma. 

\begin{lemma}\label{lem:low_degree_cohomology}
The cohomology group $\widetilde{H}^k(\Omega_0^3(E_7);\Z/2)$ is trivial for $k \leq 7$.
\end{lemma}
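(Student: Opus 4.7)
The plan is to compare $E_7$ with the Eilenberg--MacLane space $K(\Z,3)$ in low dimensions. Since $E_7$ is $2$-connected with $\pi_3(E_7)=\Z$, a generator of $H^3(E_7;\Z)$ is classified by a map $f\colon E_7\to K(\Z,3)$ whose homotopy fibre is the $3$-connected cover $E_7\langle 3\rangle$. Applying $\Omega^3$ to the fibration $E_7\langle 3\rangle\to E_7\to K(\Z,3)$, and using that $\Omega^3 K(\Z,3)=\Z$ is discrete, identifies the basepoint component $\Omega_0^3 E_7$ with $\Omega^3 E_7\langle 3\rangle$.

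The key step is to verify that $f$ induces an isomorphism on $H^*(-;\Z/2)$ through degree $11$. Using the standard structure of $H^*(E_7;\Z/2)$, whose algebra generators $x_3,x_5,x_9,\dots$ transgress from the generators $y_4,y_6,y_{10},\dots$ of $H^*(BE_7;\Z/2)$, one checks that in degrees $\leq 11$ the cohomology $H^*(E_7;\Z/2)$ has $\Z/2$-basis $\{1,x_3,x_5,x_3^2,x_3 x_5,x_3^3,x_9,x_5^2,x_3^2 x_5\}$ in degrees $0,3,5,6,8,9,9,10,11$, and vanishes in degrees $1,2,4,7$. The map $f^*$ sends $\iota_3\mapsto x_3$, and by naturality of Steenrod operations carries $\Sq^2\iota_3\mapsto \Sq^2 x_3=x_5$ and $\Sq^4\Sq^2\iota_3\mapsto \Sq^4\Sq^2 x_3=x_9$. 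Comparison with Serre's computation of $H^*(K(\Z,3);\Z/2)$ in this range (whose algebra generators sit in the same degrees $3,5,9,17,\dots$) then yields the claimed isomorphism.

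By the Serre spectral sequence comparison for $E_7\langle 3\rangle\to E_7\to K(\Z,3)$, the isomorphism through degree $11$ forces $\widetilde{H}^k(E_7\langle 3\rangle;\Z/2)=0$ for $k\leq 10$. Equivalently, by mod-$\Ch$ Hurewicz with $\Ch$ the Serre class of finite abelian groups of odd order, the $2$-primary part of $\pi_k(E_7)$ vanishes for $4\leq k\leq 10$. Hence $\pi_k(\Omega^3 E_7\langle 3\rangle)_{(2)}=\pi_{k+3}(E_7)_{(2)}=0$ for $k\leq 7$, and a final application of mod-$\Ch$ Hurewicz together with universal coefficients yields $\widetilde{H}^k(\Omega_0^3 E_7;\Z/2)=0$ for $k\leq 7$.

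The main obstacle is the explicit verification of the Steenrod-algebra-module structure of $H^*(E_7;\Z/2)$ in low degrees, in particular the identities $\Sq^2 x_3=x_5$ and $\Sq^4\Sq^2 x_3=x_9$. These follow by transgression from the corresponding identities $\Sq^2 y_4=y_6$ and $\Sq^4\Sq^2 y_4=y_{10}$ in $H^*(BE_7;\Z/2)$, whose Steenrod operations are tabulated in \cite{kms}. The dimensional agreement of the basis listed above with the basis of $H^{\leq 11}(K(\Z,3);\Z/2)$ is then routine.
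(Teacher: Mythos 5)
Your proof is correct, but it takes a genuinely different route from the paper. The paper simply quotes Choi--Yoon's explicit computation of $H_*(\Omega_0^3(E_7);\Z/2)$ as a tensor product involving Dyer--Lashof operations on a degree-$30$ class and the triple loop spaces $\Omega^3(S^k)$ for $k\in\{11,15,19,23,27,35\}$, and observes that every tensor factor has reduced homology concentrated in degrees $\geq 8$. You instead work entirely on the level of $E_7$ itself: the identification $\Omega_0^3E_7\simeq\Omega^3(E_7\langle 3\rangle)$, the Zeeman-type comparison of $E_7$ with $K(\Z,3)$ through degree $11$ (which is sound: the dimension counts match, and $\Sq^2x_3=x_5$, $\Sq^4\Sq^2x_3=x_9$ are standard, transgressed from $\Sq^2y_4=y_6$, $\Sq^4\Sq^2y_4=y_{10}$ in $BE_7$), and two applications of the mod-$\mathcal{C}$ Hurewicz theorem. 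In effect you are reproving the classical fact that $\pi_i(E_7)=0$ for $4\leq i\leq 10$ (you only establish the $2$-primary vanishing, which suffices); citing that fact directly would shorten your argument considerably. The trade-off is clear: your argument is more elementary and self-contained, avoiding the Dyer--Lashof machinery, while the paper's reference to Choi--Yoon yields the full mod-$2$ homology of the triple loop space rather than just its $7$-connectivity.
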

\begin{proof}
Let $Q_i$ denote the $i$th Dyer--Lashof operation \cite[Definition 4.1]{ka}
\begin{equation*}
    Q_i\colon H_k(\Omega^n X ;\Z/2) \longrightarrow H_{2k+i}(\Omega^n X;\Z/2)
\end{equation*}
for $0 \leq i \leq n-1$, and 
let $\beta \colon H_k(X,\Z/2) \to H_{k-1}(X;\Z/2)$ denote the Bockstein homomorphism.
By \cite[Theorem 3.15]{cy} there is an isomorphism 
    \begin{align*}
        H_\ast(\Omega_0^3(E_7);\Z/2) \cong \Z/2[Q_1^a\beta (u_{30})] \otimes \Z/2[Q_1^a Q_2^b (u_{30})] \otimes \left(
        \bigotimes_{k\in J} H_\ast(\Omega^3(S^k);\Z/2)\right),
    \end{align*}
where $J = \{11,15,19,23,27,35\}$ and where $a$ and $b$ range over all non-negative integers. 
Since the reduced homology of the triple loop space of a sphere of dimension at least $11$ is concentrated in degrees $\geq 8$ (see \cite[page 74]{choi96}), we get that $\widetilde{H}_k(\Omega_0^3(E_7);\Z/2)$ is trivial for $k \leq 7$. By the universal coefficient theorem the same holds for $\widetilde{H}^k(\Omega_0^3(E_7);\Z/2)$.
\end{proof}

We now prove our main result.

\begin{theorem}\label{thm:e7_bundle}
Let $\xi$ be a principal $E_7$-bundle over $S^4$. 
Then the image of $\ev^\ast(e_4)$ in $H^4(\BGa;\Z)$ is not in the image of the Thom morphism.
\end{theorem}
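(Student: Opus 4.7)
My plan is to combine the detection of $e_4$ by $\Sq^3$ established in Theorem \ref{thm:thom_BE7} with the naturality of the Steenrod squares in order to transport the obstruction from $BE_7$ up to $\Map_f(S^4,BE_7) \simeq B\Ga(\xi)$. Writing $r$ for mod $2$ reduction and recalling from Theorem \ref{thm:thom_BE7} that $\Sq^3(r(e_4)) = y_7 \neq 0$ in $H^7(BE_7;\Z/2)$, naturality of $r$ and of $\Sq^3$ yields
\[
\Sq^3(r(\ev^\ast(e_4))) = \ev^\ast(\Sq^3(r(e_4))) = \ev^\ast(y_7)
\]
in $H^7(\Map_f(S^4,BE_7);\Z/2)$. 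The theorem will then follow as soon as I show that $\ev^\ast(y_7) \neq 0$, since all odd-degree Steenrod operations vanish on the image of the Thom morphism.

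To verify that $\ev^\ast(y_7)$ is nonzero, I would run the Serre spectral sequence of the fibration (\ref{BGfibseq2}) with $\Z/2$-coefficients. Since $BE_7$ is simply connected, the $E_2$-page takes the form
\[
E_2^{p,q} = H^p(BE_7; H^q(\Omega_0^3(E_7);\Z/2)).
\]
By Lemma \ref{lem:low_degree_cohomology}, $\widetilde{H}^q(\Omega_0^3(E_7);\Z/2)$ vanishes for $1 \leq q \leq 7$, so in each total degree $n \leq 7$ only the base row $q = 0$ survives at $E_2$. Every differential out of $E_r^{n,0}$ with $n \leq 7$ lands in a bidegree with negative $q$-coordinate and hence vanishes, and no nontrivial differential can hit $E_r^{n,0}$ either. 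This forces $\ev^\ast$ to be an isomorphism $H^n(BE_7;\Z/2) \xrightarrow{\cong} H^n(\Map_f(S^4,BE_7);\Z/2)$ for all $n \leq 7$. In particular $\ev^\ast(y_7) \neq 0$, which concludes the argument.

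The genuinely nontrivial input is Lemma \ref{lem:low_degree_cohomology}, i.e., the low-degree vanishing of the mod $2$ cohomology of $\Omega_0^3(E_7)$ provided by the Dyer--Lashof description of its homology. Once that is in hand, the rest of the proof is only a naturality identity together with a collapse of the Serre spectral sequence in a narrow range of total degrees, so I expect no further obstacle beyond carefully stating these low-degree arguments.
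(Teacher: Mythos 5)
Your proposal is correct and follows essentially the same route as the paper: both arguments use Lemma \ref{lem:low_degree_cohomology} to collapse the low-degree part of the Serre spectral sequence of the evaluation fibration, deduce $\ev^\ast(y_7)\neq 0$, and then invoke naturality of $\Sq^3\circ r$ together with the vanishing of odd-degree Steenrod operations on the image of the Thom morphism.
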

\begin{proof}
Due to the homotopy equivalence $\BGa \simeq \mapbe$, it suffices to show that $\ev^\ast(e_4) \in H^4(\mapbe;\Z)$ is not in the image of the Thom morphism. 
We consider the mod $2$ cohomology Serre spectral sequence of the fibre sequence
\begin{equation*}
\begin{tikzcd}
    \Omega_0^3(E_7) \arrow[r, hookrightarrow] & \mapbe \arrow[r,"\ev"] & BE_7.
\end{tikzcd}
\end{equation*}
The $E_2$-page is given by 
\begin{equation*}
    E_2^{p,q} = H^p(BE_7;\Z/2) \otimes H^q(\Omega_0^3(E_7);\Z/2).
\end{equation*}
By Lemma \ref{lem:low_degree_cohomology}, the element $y_7 \in H^7(BE_7;\Z/2) \cong E_2^{7,0}$ is not in the image of any nontrivial differentials for degree reasons. Thus, $\ev^\ast(y_7) \in H^7(\mapbe;\Z/2)$ is nonzero. Furthermore, the proof of Theorem \ref{thm:thom_BE7} shows that $\Sq^3(r(e_4)) = y_7$. The commutative diagram
\begin{equation*}
\begin{tikzcd}[row sep=large]
     H^4(BE_7;\Z) \arrow[r, "\ev^\ast"] \arrow[d, "r"'] & H^4(\mapbe;\Z) \arrow[d, "r"] \\
     H^4(BE_7;\Z/2) \arrow[r, "\ev^\ast"] \arrow[d, "\Sq^3"'] & H^4(\mapbe;\Z/2) \arrow[d, "\Sq^3"] \\
     H^7(BE_7;\Z/2) \arrow[r, "\ev^\ast"] & H^7(\mapbe;\Z/2)
\end{tikzcd}
\end{equation*}
then shows that $\Sq^3 \circ r$ acts non-trivially on $\ev^\ast(e_4)$, which completes the proof. 
\end{proof}

\begin{remark}
We can similarly examine whether the Thom morphism is surjective for principal $SO(n)$-bundles. Let $\xi$ denote a principal $SO(n)$-bundle over $S^2$ where the classifying map $f$ is not homotopic to the constant map. It was shown in \cite[Theorem 1.2]{minowa} that $\BGa$ has torsion if and only if $n \geq 5$. 
Since we found in Theorem \ref{thm:bso} that the Thom morphism is non-surjective for $BSO(n)$ when $n$ is even, a natural candidate for a gauge group whose classifying space has a non-surjective Thom morphism is $\xi$ as above when $n=6$. 
Since $\Omega_f^2(BSO(6)) \cong \Omega_0(SO(6)) \cong \Omega(\Spin(6))$, there is a fiber sequence
    \begin{equation}\label{bso6_seq}
    \begin{tikzcd}
        \Omega(\Spin(6)) \arrow[r, hookrightarrow] & \mapbso \arrow[r, "\ev"] & BSO(6).
    \end{tikzcd}
    \end{equation}
By \cite[Lemma 2.2]{choi96}, we know that
    \begin{equation*}
        H^\ast(\Omega(\Spin(6));\Z/2) \cong \Z/2[a_2]/(a_2^4) \otimes \Gamma[c_6] \otimes \left( \bigotimes_{i=0}^\infty \Z/2[\gamma_{i}(b_4)]/(\gamma_{i}(b_4)^4) \right), 
    \end{equation*}
    where $\Gamma[x]$ denotes the divided power algebra generated by $\{\gamma_0(x),\gamma_1(x),\ldots\}$. 
It is then possible that the differential $d_9\colon H^8(\Omega(\Spin(6));\Z/2) \longrightarrow H^9(BSO(6);\Z/2)$ in the Serre spectral sequence of sequence \eqref{bso6_seq} maps $\gamma_1(b_4)$ to $y_3y_6$, which would imply that the element $y_3y_6$ does not survive to the $E_\infty$-page. 
However, we were unable to determine whether this differential is trivial or not and we therefore do not know whether the Thom morphism is surjective in this case.
\end{remark}


\section{Non-injectivity of the integral Thom morphism}\label{sec:injectivity}

Let $X$ be a CW-complex and let $X_k$ denote the $k$-skeleton. 
Recall that $MU^\ast(X)$ is said to \textit{satisfy the Mittag--Leffler condition} if for all $n \geq 0$ there exists some $m \geq n$ such that $\Imm (MU^\ast(X) \rightarrow MU^\ast(X_n)) = \Imm (MU^\ast(X_m) \rightarrow MU^\ast(X_n))$. 
It follows from the Milnor short exact sequence that if $MU^\ast(X)$ satisfies the Mittag--Leffler condition, then $MU^\ast(X) = \varprojlim MU^\ast(X_n)$.
Letting $MU^\ast$ act on $\Z$ (or $\Z/p$) by having all generators in nonzero degree act trivially on $\Z$, we get a tensor product $MU^\ast(X) \otimes_{MU^\ast} \Z$. This is isomorphic to $MU^\ast(X)$ modulo an ideal contained in the kernel of the Thom morphism. Thus, the reduced Thom morphism 
\[
MU^\ast(X) \otimes_{MU^\ast} \Z \longrightarrow H^\ast(X;\Z)
\]
is well-defined (see also \cite[page 470]{totaro}). 
The following lemma is a general version of \cite[Corollary 5.3]{totaro} which is formulated for classifying spaces of compact Lie groups only. 
The proof, however, is the same as in \cite{totaro}. 
For completeness, we provide the reader with the full argument here since we will apply the assertion later in Theorem \ref{thm:LG_not_injective}. 

\begin{lemma}\label{lem:noninjectivity}
    Let $X$ be a CW-complex of finite type such that $MU^\ast(X)$ satisfies the Mittag--Leffler condition. Let $p$ be a prime, $k$ an integer and $x \in H^k(X;\Z)$. Suppose that the image of the Thom morphism $MU^k(X) \rightarrow H^k(X;\Z)$ contains $px$ but no element $y$ such that $py = px$. Then the Thom morphism 
    \begin{equation*}
        MU^{k+2}(X\times B\Z/p)\otimes_{MU^\ast}\Z \rightarrow H^{k+2}(X\times B\Z/p;\Z)
    \end{equation*}
    is not injective.
\end{lemma}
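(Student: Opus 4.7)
My plan is to produce an explicit nonzero element in the kernel of the reduced Thom morphism, following the template of Totaro's proof of his Corollary 5.3 (of which this lemma is a straightforward generalization, with the Mittag--Leffler condition taking over the role of the finiteness properties used there for $BG$). The first step is a Künneth identification for $MU^\ast(X \times B\Z/p)$. Starting from the classical calculation $MU^\ast(B\Z/p) \cong MU^\ast[[e]]/([p](e))$, where $e \in MU^2(B\Z/p)$ is the MU Euler class of the tautological line bundle and $[p](e) = pe + \sum_{i \geq 2} b_i e^i$ is the $p$-series of the formal group law with $b_i \in MU^{2-2i}$, the Milnor short exact sequence together with the Mittag--Leffler hypothesis on $X$ yield
\[
MU^\ast(X \times B\Z/p) \;\cong\; MU^\ast(X)[[e]] \big/ ([p](e)).
\]

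Next, using the first half of the hypothesis, choose a lift $a \in MU^k(X)$ with $\tau(a) = px$, and set $z := a \cdot e \in MU^{k+2}(X \times B\Z/p)$. My candidate for a kernel class of $\bar{\tau}$ is $z \otimes 1 \in MU^{k+2}(X \times B\Z/p) \otimes_{MU^\ast} \Z$. Since $\tau(e)$ is the generator $t \in H^2(B\Z/p;\Z) \cong \Z/p$, we immediately get $\bar{\tau}(z \otimes 1) = (px) \cdot t = x \cdot (pt) = 0$, so $z \otimes 1$ lies in the kernel.

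The heart of the argument is to show $z \otimes 1 \neq 0$. Let $I \subset MU^\ast$ denote the augmentation ideal. Since $b_i \in MU^{2-2i} \subset I$ for $i \geq 2$, the power series $[p](e)$ reduces to $pe$ modulo $I$, and hence
\[
MU^{k+2}(X \times B\Z/p) \otimes_{MU^\ast} \Z \;\cong\; R[[e]]\big/(pe), \qquad \text{where } R := MU^\ast(X) \otimes_{MU^\ast} \Z.
\]
A direct inspection of this quotient shows that the class of $a \cdot e$ vanishes precisely when the image of $a$ in $R$ is divisible by $p$, i.e., when $a \in p \cdot MU^k(X) + I \cdot MU^k(X)$. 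Were this the case, writing $a = pb + \iota$ with $\iota \in I \cdot MU^k(X)$ and applying $\tau$, which annihilates $I$-multiples by the very definition of the reduced Thom morphism, would yield $p\tau(b) = \tau(a) = px$, so $y := \tau(b)$ would be a class in the image of the Thom morphism satisfying $py = px$, contradicting the hypothesis. The main technical obstacle is the Künneth identification in the first step: the Mittag--Leffler condition is exactly what is needed to control the $\varprojlim^1$-terms and pass the finite-skeleton Künneth isomorphism to the inverse limit, after which the remaining verifications are formal manipulations with the $p$-series of the formal group law.
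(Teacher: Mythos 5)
Your proposal is correct and follows essentially the same route as the paper: both rely on Landweber's K\"unneth theorem for $MU^\ast(Y\times B\Z/p)$ together with the Mittag--Leffler condition to pass to the limit, both exhibit the kernel class $(a\cdot e)\otimes 1$ (the paper's $(\alpha\otimes 1)c$), and both prove its nonvanishing by the same contradiction with the hypothesis that no $y$ in the image satisfies $py=px$. The only cosmetic difference is that you phrase the computation via $MU^\ast(B\Z/p)\cong MU^\ast[[e]]/([p](e))$ and reduce the $p$-series modulo the augmentation ideal, whereas the paper works directly with $MU^\ast(B\Z/p)\otimes_{MU^\ast}\Z\cong\Z[c]/(pc)$ and its induced direct sum decomposition.
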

\begin{proof}
We first show that the map $MU^k(X)\otimes_{MU^\ast} \Z/p \rightarrow H^k(X;\Z/p)$ is not injective. 
Suppose that $\alpha \in MU^k(X)$ maps to $px$ under the Thom morphism. 
If the element $(\alpha \otimes 1) \in MU^k(X) \otimes_{MU^\ast} \Z/p$ is zero, then $\alpha = p\beta$ for some $\beta \in MU^k(X)$. 
It follows that the Thom morphism maps $\beta$ to some $y$ with $py = px$, which contradicts the assumption. 
Therefore, $\alpha \otimes 1$ is nonzero. 
However, the Thom morphism maps $\alpha \otimes 1$ to $0 \in H^k(X;\Z/p)$. 
Thus $MU^k(X)\otimes_{MU^\ast} \Z/p \rightarrow H^k(X;\Z/p)$ is not injective.  
    
For all finite CW-complexes $Y$, there is an isomorphism
    \begin{equation*}
        MU^\ast(Y\times B\Z/p) \cong MU^\ast(Y)\otimes_{MU^\ast}MU^\ast(B\Z/p)
    \end{equation*}
    by \cite[Theorem 2']{landweber70}.
    This implies  
\begin{align*}
MU^\ast(X\times B\Z/p) & 
\cong \varprojlim MU^\ast(X_n \times B\Z/p) 
\cong \varprojlim \left( MU^\ast(X_n) \otimes_{MU^\ast} MU^\ast(B\Z/p) \right) \\
& \cong MU^\ast(X) \otimes_{MU^\ast} MU^\ast(B\Z/p).
\end{align*}
It then follows that
\begin{align*}
MU^\ast(X\times B\Z/p) \otimes_{MU^\ast} \Z 
& \cong \left(MU^\ast(X) \otimes_{MU^\ast}MU^\ast(B\Z/p)\right)  \otimes_{MU^\ast}\Z \\
& \cong \left( MU^\ast(X) \otimes_{MU^\ast} \Z \right) \otimes_{MU^\ast} \left( MU^\ast(B\Z/p) \otimes_\Z \Z \right) \\
& \cong \left( MU^\ast(X) \otimes_{MU^\ast} \Z \right) \otimes_{\Z} \left( MU^\ast(B\Z/p) \otimes_{MU^\ast} \Z \right) \\
& \cong \left( MU^\ast(X) \otimes_{MU^\ast} \Z \right) \otimes_{\Z} \left( \Z[c]/(pc) \right),
\end{align*}
where $\vert c \vert = 2$ and where the final isomorphism comes from the fact that the Thom morphism
\begin{equation*}
MU^\ast(B\Z/p) \otimes_{MU^\ast} \Z  \longrightarrow H^\ast(B\Z/p;\Z) \cong \Z[c]/(pc)
\end{equation*}
is an isomorphism. 
Moreover, we get 
\begin{align*}
& \left( MU^\ast(X) \otimes_{MU^\ast} \Z \right) \otimes_{\Z} \left( \Z[c]/(pc) \right) 
 \cong  \left(MU^\ast(X)\otimes_{MU^\ast}\Z\right)\![c]/(pc) \\
\cong & \left( MU^\ast(X)\otimes_{MU^\ast}\Z\right) \oplus \prod_{i=1}^\infty \left( MU^\ast(X) \otimes_{MU^\ast} \Z \right)c^i/(pc^i) \\
\cong & \left( MU^\ast(X)\otimes_{MU^\ast}\Z\right) \oplus \prod_{i=1}^\infty \left( MU^\ast(X)\otimes_{MU^\ast}\Z/p \right)c^i.
\end{align*}
Thus, the integral Thom morphism for the space $X \times B\Z/p$ is given by the composite map
    \begin{equation*}
    \begin{tikzcd}[cramped, column sep=tiny]
        \left( MU^\ast(X)\otimes_{MU^\ast}\Z\right) \oplus \prod_{i=1}^\infty \left( MU^\ast(X)\otimes_{MU^\ast}\Z/p \right)c^i \arrow[d] \\
        H^\ast(X;\Z) \oplus \prod_{i=1}^\infty H^\ast(X;\Z/p)c^i \arrow[r, "\cong"] & H^\ast(X;\Z) \otimes_\Z H^\ast(B\Z/p;\Z) \arrow[d, hookrightarrow] \\
        & H^\ast(X\times B\Z/p;\Z).
    \end{tikzcd}
    \end{equation*}
Since $MU^k(X)\otimes_{MU^\ast} \Z/p \rightarrow H^k(X;\Z/p)$ is not injective, we get that the subgroup $\left( MU^k(X)\otimes_{MU^\ast}\Z/p \right)\!c$ maps noninjectively to $H^{k+2}(X \times B\Z/p)$, and the statement follows.
\end{proof}

By combining the previous lemma with the results in Section \ref{sec:BG}, we get multiple examples of when the Thom morphism is not injective.

\begin{theorem}
    Let $G=$ $G_2$, $F_4$, $E_6$, $E_7$ or $SO(n)$ with $n \geq 4$ even. Then the Thom morphism 
    \begin{equation*}
        MU^6(BG \times B\Z/2) \otimes_{MU^\ast} \Z \longrightarrow H^6(BG \times B\Z/2;\Z)
    \end{equation*} is not injective. 
\end{theorem}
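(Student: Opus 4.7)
The approach is to apply Lemma \ref{lem:noninjectivity} with $p = 2$ and $k = 4$ in each case, which places the non-injectivity of the reduced Thom morphism in degree $k+2 = 6$. Two hypotheses must be verified: that $MU^\ast(BG)$ satisfies the Mittag--Leffler condition, and that there exists an element $x \in H^4(BG;\Z)$ such that $2x$ is in the image of $\tau$ while no class $y$ with $2y = 2x$ lies in the image of $\tau$.

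The Mittag--Leffler condition is standard for $BG$ in each of the listed cases: the cohomology is of finite type and for each fixed total degree only finitely many Atiyah--Hirzebruch differentials can be non-trivial, so the images of the restriction maps $MU^\ast(BG) \to MU^\ast(BG_n)$ stabilise. For the choice of $x$, I would take $x = e_4$ for the exceptional cases $G_2, F_4, E_6, E_7$, and $x = \chi_n$ (with $n = 4$, and the analogous degree $4$ class furnished by the proof of Theorem \ref{thm:bso} in the remaining $SO(n)$ cases). In each instance, Theorems \ref{thm:thom_BG}, \ref{thm:thom_BE7}, and \ref{thm:bso} exhibit an odd Steenrod operation, specifically $\Sq^3$, which is non-trivial on $r(x)$. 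The Bockstein diagram analyses carried out in those proofs furthermore show that no $2$-torsion correction $t$ of degree $4$ can cancel $\Sq^3(r(x))$; this establishes that $x + t$ is never in the image of $\tau$, which is equivalent to the condition that no $y$ with $2y = 2x$ lies in the image.

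It remains to check that $2x$ itself is in the image of $\tau$. Since $r(2x) = 0 \in H^4(BG;\Z/2)$, the obstruction $\Sq^3$ vanishes on $2x$. Modulo odd primes, the differentials in the Atiyah--Hirzebruch spectral sequence computing $MU^\ast(BG)$ already vanish on the generator $x$, as is recorded in the proofs of Theorems \ref{thm:thom_BG} and \ref{thm:thom_BE7}; hence every higher differential on $2x$ in the integral Atiyah--Hirzebruch spectral sequence takes values in $2$-torsion, and after multiplication by $2$ becomes trivial. Consequently $2x$ is a permanent cycle and lifts to an element of $MU^4(BG)$. With all hypotheses of Lemma \ref{lem:noninjectivity} in place, the non-injectivity in degree $6$ follows. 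The main technical point I anticipate is the uniform verification of the permanent-cycle assertion for $2x$ in the presence of the large mod $2$ cohomology rings of $BE_7$ and $BSO(n)$; once that is in hand, the remainder of the argument is essentially a bookkeeping application of the lemma.
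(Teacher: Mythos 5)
Your overall strategy --- feeding the degree-four generator into Lemma \ref{lem:noninjectivity} with $p=2$ and $k=4$ --- is the same as the paper's, but the step where you verify the hypothesis that $2x$ lies in the image of $\tau$ is a genuine gap, and the specific claim you make is in fact false for several of the groups. You argue that $2e_4$ is in the image because $\Sq^3$ vanishes on $r(2e_4)$ and because ``modulo odd primes the differentials already vanish on the generator $x$''. The latter is not recorded anywhere: the proofs of Theorems \ref{thm:thom_BG} and \ref{thm:thom_BE7} check the odd-primary operations only on the generators \emph{other} than $e_4$, and for $F_4$, $E_6$ and $E_7$ it is not true --- the mod $3$ reduction of $e_4$ supports a nonzero odd-degree operation (e.g.\ $\beta P^1 x_4 = x_9 \neq 0$ in $H^{9}(BF_4;\Z/3)$), so $2e_4$ is \emph{still not} in the image of $\tau$ and one needs at least a multiple divisible by $6$. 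Even where no odd-primary obstruction occurs, the assertion that multiplication by $2$ kills all higher differentials assumes the targets have exponent $2$ and ignores later pages. The paper's proof sidesteps all of this with a minimality trick: Landweber's theorem gives the Mittag--Leffler condition, hence \emph{some} minimal positive multiple $ne_4$ lies in the image of $\tau$; the nonvanishing of $\Sq^3$ on $r(e_4)$ forces $2 \mid n$; and Lemma \ref{lem:noninjectivity} is then applied to $x = (n/2)\,e_4$, for which $2x = ne_4$ is in the image while the unique $y$ with $2y = 2x$, namely $(n/2)\,e_4$, is not, by minimality of $n$. No determination of $n$, and no permanent-cycle analysis for $2e_4$, is required. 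You should replace your argument for ``$2x \in \operatorname{im}\tau$'' with this one.

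A smaller but real issue is your treatment of $SO(n)$ for even $n \geq 6$: you appeal to ``the analogous degree $4$ class furnished by the proof of Theorem \ref{thm:bso}'', but that theorem only produces a class in degree $n$, not in degree $4$, and in degree $4$ the non-torsion part of $H^4(BSO(n);\Z)$ is generated by the Pontryagin class, which is hit by $\tau$ via complexification. So for these groups the input to Lemma \ref{lem:noninjectivity} in degree $4$ cannot simply be read off from Theorem \ref{thm:bso} and must be identified with more care than your sketch provides.
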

\begin{proof}
We first note that $H^4(BG;\Z) \cong \Z$, where $e_4$ denotes the canonical generator. By Theorems \ref{thm:bso}, \ref{thm:thom_BG} and \ref{thm:thom_BE7} $e_4$ is not in the image of the Thom morphism. However, by \cite[Theorem 1]{landweber72}, $MU^\ast(BG)$ satisfies the Mittag--Leffler condition, from which it follows that some integer multiple $ne_4$ is in the image of the Thom morphism. Assume that $n$ is minimal. 
Since $\Sq^3$ acts non-trivially on $e_4$, it follows that $n$ is a multiple of $2$. Setting $p=2$, we then see that the conditions of Lemma \ref{lem:noninjectivity} are satisfied, and the statement follows. In fact, assuming that $\alpha \in MU^4(BG)$ maps to $ne_4$, we see that the element 
    \begin{equation*}
        (\alpha \otimes 1)c \in (MU^4(BG) \otimes_{MU^\ast} \Z/2)c \subseteq MU^6(BG\times B\Z/2)
    \end{equation*}
    is mapped to $0$ in $H^k(BG\times B\Z/2;\Z)$.
\end{proof}

\begin{remark}
For $E_7$, we note that also other generators can be used to construct non-trivial elements in the kernel by Theorem \ref{thm:thom_BE7}. 
\end{remark}

\begin{remark}\label{rem:KY_stronger}
We note that, for some of the groups, the work of Kono and Yagita allows for a stronger statement. 
One can deduce from \cite[pages 795-796]{ky} that the reduced Thom morphism $MU^\ast(BG)\otimes_{MU^\ast}\Z \longrightarrow H^\ast(BG;\Z)$ is not injective for $G=F_4$ and $G=E_6$.
\end{remark}

If $X$ is a Lie group, and not the classifying space, the Thom morphism is in many cases not injective. 
In the following theorem we give a complete list for the simplest cases where injectivity fails.   

\begin{theorem}\label{thm:LG_not_injective}
Let $G$ be a compact connected Lie group with simple Lie algebra. 
For an integer $n\ge 1$, let $r$ denote the smallest natural number such that $2^r \mid n$.
The Thom morphism
    \begin{equation*}
        MU^k(G \times B\Z/p)\otimes_{MU^\ast} \Z \longrightarrow H^k(G \times B\Z/p;\Z)
    \end{equation*}
is not injective in the following cases:
\begin{table}[H]
\renewcommand{\arraystretch}{1.2}
\begin{tabular}{|l|l|l|l|}
\hline
\rm{Group}             & $n$ & $k$ & $p$ \\ \hline
$\Spin(n)$       & $n\geq7$ &  $5$   &  $2$   \\ \hline
$SO(n)$          &  $n\geq5$   &  $5$   &   $2$  \\ \hline
$Ss(n)$          &  $8 \mid n$   &  $5$   &   $2$  \\ \hline
$Ss(n)$          &  $8 \nmid n$   &  $9$   &   $2$  \\ \hline
$PSO(n)$           &  $8 \mid n$   &  $5$   &   $2$  \\ \hline
$PSO(n)$           &   $8 \nmid n,\; n\geq 10$  &  $9$   &   $2$  \\ \hline
$PSp(n)$         &  $n$ even   &  $2^{r+1} + 1$   &   $2$  \\ \hline
$SU(n)/\Gamma_l$ &   $4 \mid n$, $\modu{l}{2}{4}$  &  $2^r + 1$   &   $2$  \\ \hline
$G_2$            &     &  $5$   &   $2$  \\ \hline
$F_4$            &     &  $5$   &   $2$ or $3$  \\ \hline
$E_6$            &     &  $5$   &   $2$ or $3$ \\ \hline
$E_6/\Gamma_3$   &     &  $5$   &   $2$  \\ \hline
$E_7$            &     &  $5,17$   &   $2$  \\ \hline
$E_7/\Gamma_2$   &     &  $5$   &   $3$  \\ \hline
$E_7/\Gamma_2$   &     &  $17$   &   $2$  \\ \hline
$E_8$            &     & $5,17,25,29$    &   $2$  \\ \hline
\end{tabular}
\end{table}
\end{theorem}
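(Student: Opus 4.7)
The plan is to apply Lemma \ref{lem:noninjectivity} to each entry of the table. Since any compact Lie group $G$ has the homotopy type of a finite $CW$-complex, $MU^\ast(G)$ is a finitely generated $MU^\ast$-module and hence trivially satisfies the Mittag--Leffler condition. Moreover, the Thom morphism is rationally surjective, so for every class $x \in H^\ast(G;\Z)$ there exists a minimal positive integer $n$ with $nx \in \Imm \tau$.

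For each row, the plan is to exhibit a class $x \in H^{k-2}(G;\Z)$ whose mod-$p$ reduction is detected by a nonvanishing Steenrod (or power) operation of odd degree. By the same argument used in Theorems \ref{thm:bso} and \ref{thm:thom_BE7}, the existence of such a nonvanishing odd-degree operation forces the minimal $n$ with $nx \in \Imm \tau$ to be divisible by $p$: otherwise $nx$ and $x$ would agree up to a unit mod $p$, placing the reduction of $x$ in the image of the mod-$p$ Thom morphism and contradicting the fact that odd-degree Steenrod operations vanish on this image. Lemma \ref{lem:noninjectivity} then produces a nonzero element in the kernel of the reduced Thom morphism in degree $k$.

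The required generators and operations are extracted from the standard descriptions of the mod-$p$ cohomology Hopf algebras $H^\ast(G;\Z/p)$ for compact Lie groups (see for example \cite{mt}). A typical example in the simply connected cases is the degree-$3$ generator $x_3 \in H^3(G;\Z/2)$, on which the odd Steenrod square $\Sq^3 x_3 = x_3^2$ is nonzero whenever $x_3$ is not $2$-nilpotent; this handles the degree-$5$ entries for $\Spin(n)$ with $n\geq 7$, as well as $G_2$, $F_4$, $E_6$, $E_7$, $E_8$ at $p=2$. The entries at $p=3$ for $F_4$, $E_6$, and $E_7/\Gamma_2$ are handled analogously using the Milnor primitive $Q_1$ of degree $5$ acting nontrivially on a mod-$3$ degree-$3$ generator. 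The higher-degree entries, such as $k = 17, 25, 29$ for $E_7$ and $E_8$ and $k = 2^{r+1}+1$ for $PSp(n)$, require applying the appropriate higher Steenrod squares to the primitive generators arising from the $2$-torsion in the homotopy of the group.

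The main obstacle will be the case analysis for the non-simply-connected quotients $SO(n)$, $Ss(n)$, $PSO(n)$, $SU(n)/\Gamma_l$, $E_6/\Gamma_3$, and $E_7/\Gamma_2$. Here one must first determine, via the Serre spectral sequence of the covering fibration $\widetilde{G} \to G \to B\pi_1(G)$ or directly through the Hopf algebra structure of $H^\ast(G;\Z/p)$, which generators from the universal cover descend to $G$ and which new generators arise from $B\pi_1(G)$. The divisibility and congruence conditions appearing in the table, such as $8 \mid n$ versus $8 \nmid n$ for $Ss(n)$ and $PSO(n)$ and $\modu{l}{2}{4}$ for $SU(n)/\Gamma_l$, reflect precisely when the relevant class exists in the stated degree and is detected by an odd-degree operation; the case analysis must be carried out carefully for each entry to match the stated minimal degree $k$.
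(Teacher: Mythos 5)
Your strategy is the same as the paper's: for each entry one produces a class $x\in H^{k-2}(G;\Z)$ that is not in the image of $\tau$ because an odd-degree Steenrod or power operation acts nontrivially on its mod-$p$ reduction, and then one feeds this into Lemma \ref{lem:noninjectivity}. The difference is that the paper's proof is a one-line citation: the entire case-by-case analysis you sketch in your last two paragraphs --- the choice of generator in each degree $k-2$, the detecting operation, and the divisibility and congruence conditions for $Ss(n)$, $PSO(n)$, $PSp(n)$, $SU(n)/\Gamma_l$ and the exceptional quotients --- is exactly the content of \cite[Theorem 1.1]{KQ}, which the authors quote rather than reprove. You are proposing to re-derive that input from the Hopf algebra structure of $H^\ast(G;\Z/p)$; that is a legitimate route, but as written it is a plan rather than a proof, and the unexecuted casework is where essentially all of the content lives.

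There is also one genuine gap in your reduction to Lemma \ref{lem:noninjectivity}. The lemma requires that the image of $\tau$ contain $px'$ but contain \emph{no} $y$ with $py=px'$. Writing your minimal $n$ as $n=pm$ and setting $x'=mx$, the candidates $y$ are exactly the classes $mx+t$ with $t$ a $p$-torsion element of $H^{k-2}(G;\Z)$; your minimality argument only excludes $y=mx$ itself. To exclude $mx+t$ for nonzero $t$ one must check that the odd-degree operation is nonzero on $r(mx+t)$ for \emph{every} such $t$ --- equivalently, on every integral lift of the free generator --- which is the Bockstein analysis carried out explicitly in Theorems \ref{thm:bso} and \ref{thm:thom_BE7} (and in \cite{KQ}). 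Moreover, if it happened that $p\mid m$, the mod-$p$ reduction of $mx$ would vanish and the Steenrod-operation detection would say nothing about $y$; one needs to rule this out (in the cases at hand $n=p$, but that also has to be verified). Without these checks the hypothesis of the lemma is not established.
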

\begin{proof}
This follows from \cite[Theorem 1.1]{KQ} and Lemma \ref{lem:noninjectivity}.
\end{proof}

\begin{remark}
If $MU^\ast(\BGa)$ satisfies the Mittag--Leffler condition for the bundle $\xi$ of Theorem \ref{thm:e7_bundle}, then the Thom morphism 
\[
MU^6(\BGa \times B\Z/2)\otimes_{MU^\ast} \Z \longrightarrow H^6(\BGa \times B\Z/2;\Z)
\]
is not injective. However, we were unable to determine whether this is the case.
\end{remark}


%
\bibliographystyle{amsalpha}

\begin{thebibliography}{999999}

\bibitem{ka} T.\,Kudo, S.\,Araki, Topology of $H_n$-spaces and $H$-squaring operations, Mem. Fac. Sci. Kyūsyū Univ. A10 (1956), 85--120.

\bibitem{ab} M.\,F.\,Atiyah, R.\,Bott, The Yang--Mills equations over Riemann surfaces, Philos. Trans. Roy. Soc. London Ser. A308 (1983), no. 1505, 523--615.

\bibitem{bo} O.\,Benoist, J.C.\,Ottem, Two coniveau filtrations, Duke Math.\,J.\,170 (2021), 2719--2753. 

\bibitem{cartanSem} H.\,Cartan, S\'eminaire Henri Cartan de l'Ecole Normale Sup\'erieure, 1954/1955. Alg\`ebres d'Eilenberg-MacLane et homotopie, Secr\'etariat Math., Paris, 1955.

\bibitem{choi96} Y.\,Choi, Homology of the double and triple loop space of $SO(n)$, Math.~Z.~222 (1996), no. 1, 59--80.

\bibitem{cy} Y.\,Choi, S.\,Yoon, Homology of the double and the triple loop spaces of $E_6$, $E_7$, and $E_8$, Manuscripta Math.~103 (2000), no. 1, 101--116.

\bibitem{hatcher} A.\,Hatcher, \emph{Algebraic topology}, Cambridge University Press, Cambridge, 2002. 

\bibitem{hill} M.\,A.\,Hill, The String bordism of $BE_8$ and $BE_8\times BE_8$ through dimension $14$, Illinois J. Math. 53 (2009), no. 1, 183--196.

\bibitem{kameko23} M.\,Kameko, Torsion in classifying spaces of gauge groups, Proc. Math. Roy. Soc. Edinb. A, published online 2024:1--16.

\bibitem{knn} M.\,Kameko, M.\,Nakagawa, T.\,Nishimoto, On the mod 2 cohomology of the classifying space of the exceptional Lie group E6, Proc.\,Japan Acad.\,Ser.\,A Math.\,Sci.\,95 (2019), no.\,9, 91--96.

\bibitem{KQ} E.\,Kaspersen, G.\,Quick, On the cokernel of the Thom morphism for compact Lie groups, Homology Homotopy Appl.~27 (2025), 323--353.

\bibitem{KT} D.\,Kishimoto, S.\,Theriault, The mod-p homology of the classifying spaces of certain gauge groups, Proceedings of the Royal Society of Edinburgh, 153 (2023), 1805--1817.

\bibitem{kms} A.\,Kono, M.\,Mimura, N.\,Shimada, On the cohomology mod $2$ of the classifying space of the $1$-connected exceptional Lie group $E_7$, J.~Pure Appl.~Algebra, 8 (1976), no.~3, 267--283.

\bibitem{ky} A.\,Kono, N.\,Yagita, Brown--Peterson and ordinary cohomology theories of classifying spaces for compact Lie groups, Trans.~AMS 339 (1993), 781--798.

\bibitem{landweber70} P.\,S.\,Landweber, Coherence, flatness and cobordism of classifying spaces, Proc. Advanced Study Inst. on Algebraic Topology, Aarhus, 1970, Vol. II, pp. 256--269.

\bibitem{landweber72} P.\,S.\,Landweber, Elements of infinite filtration in complex cobordism, Math. Scand. 30 (1972), 223--226.


\bibitem{mimura} M.\,Mimura, The characteristic classes for the exceptional Lie groups, Adams Memorial Symposium on Algebraic Topology, 1 (Manchester, 1990), 103--130. London Math. Soc. Lecture Note Ser., 175, Cambridge University Press, Cambridge, 1992. 

\bibitem{mt} M.\,Mimura, H.\,Toda, Topology of Lie groups. I and II, Translations of Mathematical Monographs, 91, American Mathematical Society, Providence, RI, 1991.

\bibitem{minowa} Y.\, Minowa, On the cohomology of the classifying spaces of SO(n)--gauge groups over $S^2$, 2023, arXiv:2304.08702v1.

\bibitem{oshita} A.\,Ohsita, On the Stiefel--Whitney classes of the adjoint representation of $E_8$, J. Math. Kyoto Univ. 44 (2004), no.3, 679--684.

\bibitem{totaro} B.\,Totaro, Torsion algebraic cycles and complex cobordism, J.\,Amer.\,Math.\,Soc.\,10 (1997), no. 2, 467--493.

\bibitem{tsukuda} S.\,Tsukuda, On the cohomology of the classifying space of a certain gauge group, Proc. Roy. Soc. Edinburgh Sect. A127 (1997), no. 2, 407--409.

\end{thebibliography}

\end{document}